\newtheorem{problem}{Question}
\newtheorem{definition}[problem]{Definition}
\newtheorem{theorem}[problem]{Theorem}
\newtheorem{observation}[problem]{Observation}
\newtheorem{lemma}[problem]{Lemma}
\newtheorem{conjecture}[problem]{Conjecture}
\newtheorem{corollary}[problem]{Corollary}
\title{The Width of a Ball in a Hypercube}
\author{Kada K Williams}
\date{March 15, 2024}
\begin{document}

\maketitle

\section{Introduction}

In \cite{Spe}, Sperner asked how many subsets of $[n]$, the set $\{1,2,\dots,n\}$, can be selected without having one that is a proper subset of another. Since then, this question has been extended to finite partially ordered sets -- posets for short -- asking how many pairwise incomparable elements can be picked. The answer to this question is referred to as the width of a poset.

Certainly, in a chain, a poset whose order is total, at most one element can be chosen. Therefore, the number of pairwise incomparable elements is limited by the number of chains required to cover the poset. Dilworth \cite{Dil} proved that conversely, denoting by $w$ the width of a poset $P$, one can partition $P$ into $w$ many chains. The subset of a poset formed by pairwise incomparable elements is called an antichain.

Expanding on Dilworth's ideas, if $(P,<)$ is a poset and $x\in P$, then we consider the height of $x$, the largest non-negative value $h$ such that one can find elements $x_0,x_1,\dots,x_h\in P$ such that $x_0<x_1<\dots<x_h$ and $x=x_h$. For example, regarding subsets of $[n]$, any $k$-element subset has height $k$, because descending to a proper subset means removing at least one element. Let $P_h$, the layer $h$, denote the set of $x\in P$ with height $h$. Then for $P=\mathcal{P}([n])$, the family of subsets of $[n]$, we observe that $P_h$ is the family of $h$-element subsets.

A modern approach to Sperner's theorem \cite{LYM} exploits the symmetry of $\mathcal{P}([n])$. First, one sees that $\emptyset \subset [1]\subset [2]\subset \dots \subset [n]$ is a maximal chain that contains an element in each layer. However, upon permuting $\{1,2,\dots,n\}$, the set $[k]$ maps to every point in layer $k$ equally. Now if $A$ is an antichain, then at most one $x\in A$ can appear in a chain. In particular, if $x$ is in layer $k$, then its chance of occurring is ${n \choose k}^{-1}$. The sum of these chances over all $x\in A$ cannot exceed $1$:
$$\sum_{x\in A} {n \choose h(x)}^{-1}\le 1.$$

More generally, let $P$ be a poset where any antichain $A$ satisfies
$$\sum_{x\in A} \Big|P_{h(x)}\Big|^{-1}\le 1.$$
Such a poset is described as a KLYM poset \cite{DFr} and has the simple property that its width equals the size of its largest layer. In this vein, the width of $\mathcal{P}([n])$ is clearly ${n \choose \lfloor n/2\rfloor }$.

Nevertheless, Greene and Kleitman \cite{GrK} obtained an explicit construction that partitions $P=\mathcal{P}([n])$ into $w(P)$ many chains. Actually, their bracketing strategy extends to the case of a multiset power set $P$, where $i\in [n]$ can be contained not only $0$ or $1$ times, but up to $\mu_i$ times. The poset in question is then isomorphic to the set of positive divisors of $N=\prod_{i=1}^{n}p_i^{\mu_i}$ under the relation of proper divisors. Wang and Yeh \cite{WaY} provide a novel proof that $P$ has the KLYM property, whilst $\frac{|P_h|}{|P_{h-1}|}$ is decreasing in $h$, via product argument.

The problem of calculating or bounding $w(P)$ for an arbitrary subposet $P$ of $\mathcal{P}([n])$, such as a downset \cite{DHL}, or the intersection of a downset and an upset, continues to be wide open. In the spirit of Richard Hamming's recurring question of which problem in this field is most important, we restrict our focus to the subposet obtained from a $p$-element set by removing or adding at most $r$ elements.

\section{Sublayers}

Write $[n]=\{1,\dots,p\}\cup \{p+1,\dots,p+q\}$, where $q=n-p$. For the family of sets obtained by subtracting $i$ elements of $\{1,\dots,p\}$ and adding $j$ elements from $\{p+1,\dots,p+q\}$, write $X_{i,j}$. Here, the subtraction and addition are independent, in the sense that $|X_{i,j}|={p\choose i}\cdot {q\choose j}$. Since the sets in $X_{i,j}$ have $p-i+j$ elements, their height shall depend on the value $i-j$.

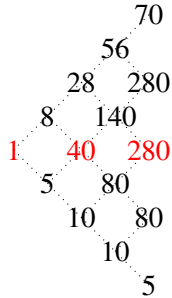
\begin{figure}[ht]
\centering
\begin{tikzpicture}[scale=0.45]
\draw[dotted] (0,0) -- (4,4);
\draw[dotted] (1,-1) -- (4,2);
\draw[dotted] (2,-2) -- (4,0);
\draw[dotted] (3,-3) -- (4,-2);
\draw[dotted] (0,0) -- (4,-4);
\draw[dotted] (1,1) -- (4,-2);
\draw[dotted] (2,2) -- (4,0);
\draw[dotted] (3,3) -- (4,2);

\node[red] at (0,0) {1};
\node at (1,-1) {5};
\node at (1,1) {8};
\node at (2,-2) {10};
\node[red] at (2,0) {40};
\node at (2,2) {28};
\node at (3,-3) {10};
\node at (3,-1) {80};
\node at (3,1) {140};
\node at (3,3) {56};
\node at (4,-4) {5};
\node at (4,-2) {80};
\node[red] at (4,0) {280};
\node at (4,2) {280};
\node at (4,4) {70};

\end{tikzpicture}
\caption{The sublayers of $B_4[5,8]$ and its largest layer}
\label{subs}
\end{figure}

\begin{definition}
    The ball of radius $r$ centered at $[p]$ in $\mathcal{P}([p+q])$ is given by
    $$B_r[p,q]=\bigcup_{i+j\le r} X_{i,j}.$$
\end{definition}

\begin{conjecture}
    The largest layer of $B_r[p,q]$ is its unique largest antichain.
\end{conjecture}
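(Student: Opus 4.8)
The plan is to reduce the entire statement to the KLYM property of $B_r[p,q]$. Granting that property, the introduction already gives that the width equals the size of the largest layer, which settles the Sperner half. For the uniqueness claim I would analyse the equality case: suppose $P_k$ is strictly larger than every other layer and $A$ is an antichain with $|A|=|P_k|$. Then
$$1=\frac{|A|}{|P_k|}=\sum_{x\in A}\frac{1}{|P_k|}\le\sum_{x\in A}\frac{1}{|P_{h(x)}|}\le 1,$$
so every inequality is tight; tightness of the middle step forces $|P_{h(x)}|=|P_k|$, and strictness of the maximum then forces $h(x)=k$, for all $x\in A$. Hence $A\subseteq P_k$ and $A=P_k$. (The hypothesis that the largest layer is \emph{strict} is genuinely needed -- for $p=q=2$, $r=1$ the layers have sizes $2,1,2$ and there are two largest antichains -- but verifying strictness in the intended range is a separate, purely numerical matter about the sums $\sum_j {p\choose j+p-h}{q\choose j}$.) Everything therefore rests on the KLYM property.

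By the equivalence of KLYM with the normalized matching property, it suffices to show that for consecutive layers and every $\mathcal A\subseteq P_h$ the upper shadow obeys $|\nabla\mathcal A|/|P_{h+1}|\ge|\mathcal A|/|P_h|$. I would work through the identification $A=([p]\setminus S)\cup T\mapsto(U,T)$ with $U=[p]\setminus S$, under which $B_r[p,q]$ becomes the region $\{(U,T)\in\mathcal P([p])\times\mathcal P(\{p+1,\dots,p+q\}):|U|-|T|\ge p-r\}$ of a product of two Boolean lattices, graded by height $|U|+|T|$. A height-increasing cover either enlarges $U$ (this decreases $i$ and gives $i$ edges out of each point of $X_{i,j}$) or enlarges $T$ (this increases $j$ and gives $q-j$ edges, but only when $i+j<r$). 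Since the whole structure is invariant under $\Gamma=S_p\times S_q$, whose orbits on each layer are exactly the sublayers $X_{i,j}$, a normalized fractional matching exists if and only if one exists for the $\Gamma$-quotient. In the quotient the vertices are the sublayers, weighted by $|X_{i,j}|={p\choose i}{q\choose j}$, joined by the two bi-regular edge-families above; the problem collapses to a weighted Hall/flow condition along the one-dimensional strings of sublayers $i\mapsto X_{i,i-(p-h)}$.

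To establish the quotient condition I would construct an explicit normalized flow as a renormalized product of the uniform flows on the two Boolean factors, checking that it is admissible; the interior inequalities reduce to the log-concavity and unimodality of ${p\choose i}$ and ${q\choose j}$, exactly as in the classical product argument behind Wang--Yeh \cite{WaY}. The main obstacle is the ball boundary $i+j=r$: there every sublayer loses its $T$-enlarging edges, so its entire upward flow must be funnelled through the single $U$-enlarging family into one neighbouring sublayer. This is precisely the mechanism that makes the layer sizes non-unimodal (for $B_4[5,8]$ they run $5,10,90,85,321,148,308,56,70$), so no symmetric chain decomposition is available and the off-the-shelf product-KLYM theorem does not apply. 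The delicate point is to show that along the boundary the surviving $U$-edges still carry enough multiplicity, relative to the binomial weights, to push the required flow into the \emph{larger} neighbouring layer; I expect this to come down to a log-concavity inequality comparing ${p\choose i-1}{q\choose j}$ with ${p\choose i}{q\choose j}$ weighted by the degrees, but proving it uniformly across all boundary antidiagonals, and simultaneously reconciling the boundary and interior flows, is where I expect the real difficulty to lie.
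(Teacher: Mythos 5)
You have not proved the statement, and it is important to be clear about why: the statement is a \emph{conjecture} in the paper. The paper itself only establishes it under the extra hypotheses $r\le\frac12(p+q)$ and $\max(p,q)=\Omega(r^3)$ (its Theorem), and explicitly says a comprehensive proof is unknown. Your proposal does not close this gap; it relocates it. Everything is reduced to the KLYM (equivalently, normalized matching) property of $B_r[p,q]$, and the decisive step --- that along the boundary antidiagonal $i+j=r$, where each sublayer's entire upward flow must be funnelled through the single edge family into $X_{i-1,j}$, the weighted Hall/flow conditions hold uniformly in $p,q,r$ --- is exactly the step you concede you cannot carry out (``where I expect the real difficulty to lie''). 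Moreover, the target of your reduction is at least as strong as the open problem itself: KLYM would give width $=$ largest layer for \emph{all} $p,q,r$, which is precisely the part no one can prove, and neither the paper nor your sketch offers evidence that balls are KLYM in general. The paper's quantitative analysis is a warning sign here: to route its chains downward from the boundary it needs $|X_{i,j}|-|X_{i+1,j-1}|\ge|X_{i,j-2}|$, which it can guarantee only when $q\ge\Omega(r^3)$; your boundary funnelling inequalities are of the same nature, and you give no reason to expect them to survive when $p$ and $q$ are small relative to $r$.

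The contrast with the paper's route is instructive, because the paper is engineered to \emph{avoid} what you are trying to prove. Its Lemma requires far less than KLYM: only one weighted collection of chains, meeting every layer, whose hit-proportions $|\mathcal{C}_{i,j}|/|X_{i,j}|$ are minimized on a single chosen layer; that weakening is what makes the construction feasible at all, and even then only in the $\Omega(r^3)$ regime. Several of your individual steps are sound and worth keeping: the derivation ``KLYM plus a strictly largest layer implies the conjecture,'' with the equality analysis, is correct; the cover structure you describe ($i$ edges that shrink $i$, and $q-j$ edges that grow $j$ but only when $i+j<r$) is right; reducing normalized matching to the $S_p\times S_q$-quotient by averaging a fractional matching is legitimate; and your observation about ties is a genuine defect in the conjecture's phrasing (for $p=q=2$, $r=1$ the extreme layers tie at size $2$, and in $B_3[3,3]$ two layers tie at size $12$, so ``the largest layer'' and ``unique'' are both problematic). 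But none of this substitutes for the missing boundary inequality: until it is proved --- or until a ball failing KLYM is exhibited, which would kill your approach outright while leaving the conjecture alive --- the proposal establishes nothing beyond what the paper already proves.
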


We can prove this conjecture in most cases, because $B_r[p,q]$ has the symmetries of relabelling $\{1,\dots,p\}$ and $\{p+1,\dots,p+q\}$, and the $|X_{i,j}|$ are well-behaved.

\begin{lemma} 
    In the poset $B_r[p,q]$, let $\mathcal{C}$ be a weighted collection of chains that contains a set in every layer. Of those, let $\mathcal{C}_{i,j}$ contain the chains that pass through $X_{i,j}$. In view of the proportions $\frac{|\mathcal{C}_{i,j}|}{|X_{i,j}|}$, if there is a layer such that these proportions are no larger outside than within, that layer is a maximal antichain.
\end{lemma}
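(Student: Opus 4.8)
The plan is to run a weighted LYM argument in which the chains of $\mathcal{C}$ play the role that the maximal chains of $\mathcal{P}([n])$ play in Lubell's proof, and then to convert the resulting weighted inequality into a bound on cardinalities by a cancellation at the distinguished layer.

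First I would record the structural facts. The ball $B_r[p,q]$ is graded by cardinality: any comparable pair in the ball is joined by a saturated chain that stays inside it (insert the recovered elements of $\{1,\dots,p\}$, which only decrease $i+j$, before the new elements of $\{p+1,\dots,p+q\}$), so distinct heights along a chain correspond to distinct cardinalities and each layer consists of the elements of one fixed cardinality. Hence every layer is automatically an antichain, and a chain that meets every layer meets each of them exactly once. Writing $w(C)\ge 0$ for the weight of $C$, $\omega(x)=\sum_{C\ni x}w(C)$, and $W=\sum_C w(C)$, this gives for every layer $L$ the identity $\sum_{x\in L}\omega(x)=\sum_C w(C)\,|C\cap L|=W$.

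Next I would symmetrize. The relabelling group $S_p\times S_q$ preserves the poset and acts transitively on each sublayer $X_{i,j}$, so replacing $\mathcal{C}$ by its orbit average leaves $W$ and every proportion $\pi_{i,j}:=|\mathcal{C}_{i,j}|/|X_{i,j}|$ unchanged while making $\omega$ constant on each $X_{i,j}$ and equal to $\pi_{i,j}$. This is the step that lets me pass from the sublayer averages that the hypothesis controls to the pointwise values the counting needs, and I expect it to be the main technical point: I must check that orbit-averaging keeps the family supported on layer-spanning chains and does not disturb the proportions.

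Now the core. Let $A$ be any antichain and $L_\ell$ the distinguished layer. Since a chain meets an antichain at most once, $\sum_{x\in A}\omega(x)=\sum_C w(C)\,|C\cap A|\le W$. Combining this with $\sum_{x\in L_\ell}\omega(x)=W$ and cancelling the common sum over $A\cap L_\ell$ yields $\sum_{x\in A\setminus L_\ell}\omega(x)\le\sum_{x\in L_\ell\setminus A}\omega(x)$. On the left each $\omega(x)=\pi_{i,j}$ comes from a sublayer outside $L_\ell$, hence is at least $m:=\min_{(i,j)\notin L_\ell}\pi_{i,j}$; on the right each comes from within $L_\ell$, hence is at most $M:=\max_{(i,j)\in L_\ell}\pi_{i,j}$. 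The layer hypothesis is precisely the extremal comparison $M\le m$ (the proportions within the layer are dominated by those outside it), so $m\,|A\setminus L_\ell|\le M\,|L_\ell\setminus A|\le m\,|L_\ell\setminus A|$, and since $m>0$ (layer-spanning chains force every $\pi_{i,j}>0$) we get $|A\setminus L_\ell|\le|L_\ell\setminus A|$, i.e. $|A|\le|L_\ell|$. As $L_\ell$ is itself an antichain, this exhibits it as a largest one. The two points to watch are the positivity of $m$ and the exact direction of the inequality $M\le m$, which is what pins down \emph{which} layer the conclusion applies to.
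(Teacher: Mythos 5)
Your proof is correct and follows essentially the same route as the paper: average $\mathcal{C}$ over the relabelling group $S_p\times S_q$ so that $\omega$ is constant on each sublayer $X_{i,j}$, then play the LYM inequality $\sum_{x\in A}\omega(x)\le W$ against the identity $\sum_{x\in L}\omega(x)=W$ for the distinguished layer; moreover, your reading of the hypothesis as $M\le m$ (largest proportion inside $L$ at most the smallest proportion outside) is exactly the direction that the paper's own proof and its later application in the Theorem require, despite the ambiguous wording of the statement. One parenthetical justification is false, though harmlessly so: layer-spanning chains do \emph{not} force every $\pi_{i,j}>0$, since a chain meets each layer in exactly one of that layer's sublayers, so a collection can meet every layer yet miss an entire sublayer outside $L$. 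The positivity of $m$ instead follows from the hypothesis itself: $W=\sum_{x\in L}\omega(x)\le M|L|$ gives $m\ge M\ge W/|L|>0$.
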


\begin{proof}
    Without loss of generality, $\mathcal{C}$ passes through all the sets in $X_{i,j}$ an equal number of times. Indeed, $X_{i,j}$ is an orbit of the action that relabels $\{1,\dots,p\}$ and $\{p+1,\dots,p+q\}$. Furthermore, if $\mathcal{C}'$ includes all $p!\cdot q!$ many images of each $C\in \mathcal{C}$, then the proportions are scaled as $\frac{|\mathcal{C}'_{i,j}|}{|X_{i,j}|}=p!\cdot q!\cdot \frac{|\mathcal{C}_{i,j}|}{|X_{i,j}|}$, thus yielding the number of $C\in \mathcal{C}'$ containing any given $x\in X_{i,j}$. Therefore, we may assume that $\mathcal{C}(x)=\{C\in\mathcal{C}|x\in C\}$ has no larger size outside layer $L$ than within.

    Now for any antichain $A$, a chain from $\mathcal{C}$ can contain at most one $x\in A$. Hence, the $\mathcal{C}(x)$ are disjoint, implying 
    $$\sum_{x\in A}|\mathcal{C}(x)|\le |\mathcal{C}|.$$
    Since each $C\in \mathcal{C}$ visits every layer, $A=L$ is a case of equality. Given that $|A|=w=w(P)\ge |L|$, the left-hand sum is minimal when it contains the smallest possible values. If $w>|L|$, then these values are given by the values across $x\in L$, and more, in contradiction with how the sum over $x\in L$ equals $1$. It follows that $L$ is a maximal antichain. Moreover, if $w=|L|$, and values outside $L$ are strictly larger than values within, then only $A=L$ is maximal.
\end{proof}

Our proof strategy, then, involves finding the largest layer and selecting a collection of chains that passes through every point of this layer once, but includes a point in $X_{i,j}$ no less than $|X_{i,j}|$ times for all $i,j$.

\begin{observation}
    For fixed radius $i+j-1$, the ratios $\frac{|X_{i-1,j}|}{|X_{i,j-1}|}$ are decreasing in $j$.
\end{observation}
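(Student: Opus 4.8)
The plan is to put the ratio into closed form and then display it as a product of two manifestly monotone factors. Since $|X_{i,j}|={p\choose i}\cdot{q\choose j}$, the radius-preserving ratio factors across the two halves of $[n]$ as
$$\frac{|X_{i-1,j}|}{|X_{i,j-1}|}=\frac{{p \choose i-1}}{{p \choose i}}\cdot\frac{{q \choose j}}{{q \choose j-1}}=\frac{i}{p-i+1}\cdot\frac{q-j+1}{j},$$
using the elementary identities ${p \choose i-1}/{p \choose i}=i/(p-i+1)$ and ${q \choose j}/{q \choose j-1}=(q-j+1)/j$. This already isolates the two competing effects, but the variables $i$ and $j$ are still coupled.

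Next I would impose the hypothesis that the radius $i+j-1$ is fixed, say equal to $t$, so that $i=t+1-j$ and $i$ decreases by one each time $j$ increases by one. Substituting $i=t+1-j$ and hence $p-i+1=p-t+j$, the ratio becomes a function of $j$ alone,
$$R(j)=\frac{t+1-j}{p-t+j}\cdot\frac{q-j+1}{j}.$$
Now both factors have a numerator that strictly decreases and a denominator that strictly increases as $j$ grows, throughout the admissible range of $j$ (where all four of $i-1,i,j-1,j$ index nonempty sublayers, so that every binomial is positive). Hence each factor is positive and decreasing in $j$.

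Finally, I would invoke the fact that a product of two positive decreasing functions is decreasing: if $f,g>0$ are decreasing, then $f(j+1)g(j+1)\le f(j)g(j+1)\le f(j)g(j)$, the first step multiplying $f(j+1)\le f(j)$ by $g(j+1)>0$ and the second multiplying $g(j+1)\le g(j)$ by $f(j)>0$. Applying this to the two factors of $R(j)$ yields the claim. I do not expect a genuine obstacle here; the only care needed is in pinning down the admissible range of $j$ so that the binomials stay positive, and the essential point is simply that the two sources of monotonicity — the shrinking number of ways to delete from $\{1,\dots,p\}$ and the shifting balance of additions to $\{p+1,\dots,p+q\}$ — reinforce one another rather than compete. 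As a sanity check one may read off the radius-$3$ column of Figure~\ref{subs}, whose successive ratios $80/10,\ 140/80,\ 56/140$ equal $8,\ 1.75,\ 0.4$, indeed decreasing.
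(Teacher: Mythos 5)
Your proof is correct and follows essentially the same route as the paper: both compute the closed form $\frac{|X_{i-1,j}|}{|X_{i,j-1}|}=\frac{i(q-j+1)}{(p-i+1)j}$ via the binomial ratio identities and then conclude by the same monotonicity observation (numerators shrink, denominators grow as $j$ increases with $i+j$ fixed). Your explicit substitution $i=t+1-j$ and the product-of-positive-decreasing-functions step merely spell out what the paper compresses into ``$j\nearrow$, $i\searrow$, $(q-j+1)i\searrow$, $(p-i+1)j\nearrow$.''
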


\begin{proof}
    Since ${p\choose i}{p\choose {i-1}}^{-1}=\frac{p-i+1}{i}$, it is clear that 
    $$\frac{|X_{i-1,j}|}{|X_{i,j-1}|}=\frac{(q-j+1)i}{(p-i+1)j}$$
    If $j\nearrow$, then $i\searrow$, $(q-j+1)i\searrow$, $(p-i+1)j\nearrow$, so the fraction decreases.
\end{proof}

\begin{corollary}
    The spheres $S_r[p,q]=\bigcup_{i+j=r} X_{i,j}$ have the KLYM property.
\end{corollary}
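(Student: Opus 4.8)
The plan is to prove that $S_r[p,q]$ is a \emph{normalized matching} poset and then invoke the classical fact that, for ranked posets, the normalized matching property is equivalent to the KLYM property.

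First I would fix the grading. Writing a point of the sphere as a pair $(R,B)$ with $R\subseteq\{1,\dots,p\}$ the deleted set and $B\subseteq\{p+1,\dots,p+q\}$ the added set, we have $(R,B)\le(R',B')$ exactly when $R'\subseteq R$ and $B\subseteq B'$, and restricted to $S_r[p,q]$ this forces distinct comparable points to have different $i$-index. Since every set in the sphere has cardinality of the same parity as $p+r$, no element can sit strictly between two consecutive sublayers, so the covering relations are precisely the single-index exchanges from $X_{i,j}$ to $X_{i-1,j+1}$ (return one point to $\{1,\dots,p\}$, drop one added point). Hence $S_r[p,q]$ is graded, its rank levels are exactly the sublayers $X_{i,r-i}$, and $|P_{h(x)}|=|X_{i,r-i}|$ for $x\in X_{i,r-i}$. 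The inequality to be proved therefore reads $\sum_{x\in A}|X_{i(x),\,r-i(x)}|^{-1}\le 1$ for every antichain $A$.

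The conceptual crux, and the step I expect to carry the weight, is a product argument on the covering graph between consecutive layers. A cover from $X_{i,j}$ to $X_{i-1,j+1}$ joins $(R,B)$ to $(R',B')$ exactly when $R'\subset R$ with $|R\setminus R'|=1$ and $B\subset B'$ with $|B'\setminus B|=1$; that is, this bipartite graph is the tensor product of the covering graph between levels $i-1$ and $i$ of $\mathcal{P}(\{1,\dots,p\})$ and the covering graph between levels $j$ and $j+1$ of $\mathcal{P}(\{p+1,\dots,p+q\})$. Each Boolean covering graph is biregular, and the degree of a vertex in a tensor product is the product of its two factor degrees; hence the graph is biregular, with upward degree $i(q-j)$ from $X_{i,j}$ and downward degree $(p-i+1)(j+1)$ from $X_{i-1,j+1}$. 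The two edge totals agree because ${p \choose i}i={p \choose i-1}(p-i+1)$ and ${q \choose j}(q-j)={q \choose j+1}(j+1)$, which incidentally recovers the ratio of the Observation as the quotient of these degrees.

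Finally I would cash biregularity into normalized matching and then into KLYM. A lower set $F\subseteq P_h=X_{i,j}$ emits $|F|\cdot i(q-j)$ edges into its up-shadow $\nabla F\subseteq P_{h+1}=X_{i-1,j+1}$, whose vertices absorb at most $(p-i+1)(j+1)$ edges apiece, so $|\nabla F|/|P_{h+1}|\ge |F|/|P_h|$; as this holds between every adjacent pair, the sphere has the normalized matching property. By the standard Hall/flow argument this yields a weighting of the maximal chains under which each $x$ lies on a $|P_{h(x)}|^{-1}$ share of the total weight, and since an antichain meets each chain at most once the desired inequality drops out, exactly as in the proof of the Lemma. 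The only delicate point is the boundary, where $r>p$ or $r>q$ truncates the admissible range of $i$; but each genuine consecutive-layer graph still compares just two adjacent, nonempty Boolean levels and so remains biregular, leaving the argument intact.
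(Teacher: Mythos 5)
Your proof is correct, but it takes a genuinely different route from the paper's. The paper settles the Corollary in three lines by symmetry: take a single maximal chain of the sphere, one meeting every sublayer $X_{i,j}$ with $i+j=r$, and close it under all $p!\,q!$ relabellings of $\{1,\dots,p\}$ and $\{p+1,\dots,p+q\}$. Since each sublayer is an orbit of this action, the resulting collection of chains passes through every point of a given layer equally often, and the KLYM inequality follows from the counting argument already isolated in the Lemma (an antichain meets each chain at most once). You instead argue locally: gradedness of the sphere with the $X_{i,j}$ as rank levels, biregularity of each consecutive-level covering graph (as a tensor product of two Boolean covering graphs, with degrees $i(q-j)$ and $(p-i+1)(j+1)$), hence the normalized matching property by edge counting, and finally the classical Kleitman equivalence of normalized matching, regular chain covers, and LYM for ranked posets. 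As for what each buys: the paper's argument is shorter and self-contained given its Lemma, but it leans entirely on the layers being orbits of a transitive group action; yours pays for an appeal to a standard external theorem, but the degree computation is explicit (it recovers the ratio in the Observation), it is in the spirit of the Wang--Yeh product argument cited in the introduction, and it would survive in settings where the layers carry no such symmetry. The one point to state carefully is gradedness when $r>\min(p,q)$: extreme sublayers then disappear, but the height of any $x\in X_{i,j}$ equals $\min(p,r)-i$, which is still constant on each surviving sublayer, so, as you note, the biregular comparisons between adjacent nonempty levels go through unchanged.
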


\begin{proof}
    Let $\mathcal{C}$ contain any chain that proceeds from a minimal point by successively adding an element of $\{p+1,\dots,p+q\}$ and removing an element of $\{1,\dots,p\}$. Upon relabelling those elements, we obtain a list of chains that passes through every point in a layer equally. We readily deduce the KLYM inequality.
\end{proof}

Analytically, $\frac{|X_{i-1,j}|}{|X_{i,j-1}|}=1$ holds exactly when $\frac{(q-j+1)i}{(p-i+1)j}=1$, which rearranges to a linear condition $\frac{i}{j}=\frac{p+1}{q+1}$. This determines the largest layer of $S_r[p,q]$.

Polynomially, $|X_{i,j}|$ has degree $i+j$ in $(p,q)$, indicating that $B_r[p,q]$ resembles $S_r[p,q]$ in general. On the other hand, there exist $r,p,q$ for which the largest layer of $B_r[p,q]$ is different from the largest layer of $S_r[p,q]$.

\begin{figure}[ht]
\centering
\begin{tikzpicture}[scale=0.5]
\draw[dotted] (0,0) -- (10,10);
\draw[dotted] (1,-1) -- (10,8);
\draw[dotted] (2,-2) -- (10,6);
\draw[dotted] (3,-3) -- (10,4);
\draw[dotted] (4,-4) -- (10,2);
\draw[dotted] (5,-5) -- (10,0);
\draw[dotted] (6,-6) -- (10,-2);
\draw[dotted] (7,-7) -- (10,-4);
\draw[dotted] (8,-8) -- (10,-6);
\draw[dotted] (9,-9) -- (10,-8);
\draw[dotted] (0,0) -- (10,-10);
\draw[dotted] (1,1) -- (10,-8);
\draw[dotted] (2,2) -- (10,-6);
\draw[dotted] (3,3) -- (10,-4);
\draw[dotted] (4,4) -- (10,-2);
\draw[dotted] (5,5) -- (10,0);
\draw[dotted] (6,6) -- (10,2);
\draw[dotted] (7,7) -- (10,4);
\draw[dotted] (8,8) -- (10,6);
\draw[dotted] (9,9) -- (10,8);

\node at (0,0) {1};
\node at (1,-1) {9};
\node at (1,1) {17};
\node at (2,-2) {36};
\node at (2,0) {153};
\node at (2,2) {136};
\node[yshift=.5em] at (10,2) {1559376};
\node[yshift=.5em] at (10,4) {1633632};
\node[yshift=-0.5em] at (8,2) {519792};
\node[yshift=-0.5em] at (8,4) {445536};

\draw[orange] (4,4) -- (10,4);
\draw[purple] (2,2) -- (10,2);
\draw[dashed] (-1,1) -- (13,5);
\end{tikzpicture}
\caption{In $B_{10}[9,17]$, sphere layers decrease downwards from below the divide}
\label{subs}
\end{figure}
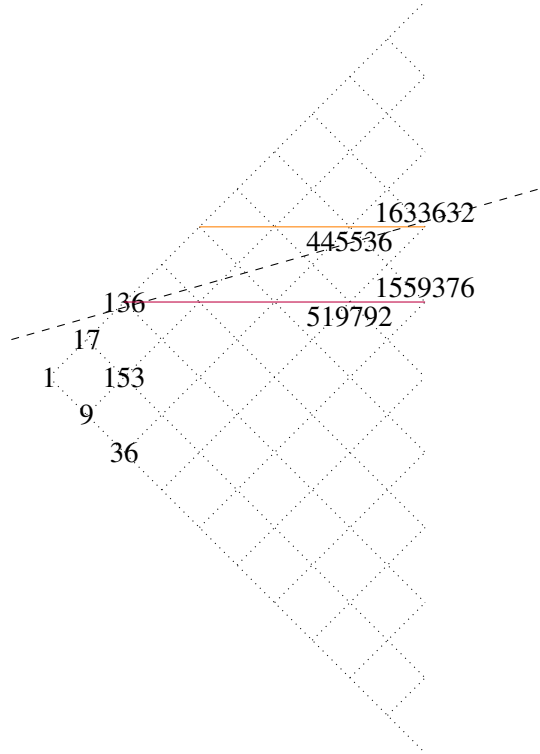

Notice that ${p\choose i}$ increases while $i\le \frac12p$, and the same is true of ${q\choose j}$, showing that provided $i+j<\frac12(p+q)$, incrementing $i$ or $j$ yields an $X_{i,j}$ at least as large. Hence, if $r\le \frac12(p+q)$, then the largest sublayer of $B_r[p,q]$ is on $S_r[p,q]$. For this $X_{i,j}$, let us take one chain through each $x\in X_{i,j}$ and anticipate where it continues.

\section{Conclusion}

\begin{theorem}
    Let $r\le \frac12(p+q)$. Then abbreviating $B_r=B_r[p,q]$ and $S_r=S_r[p,q]$,
    $$w(B_r)\le w(S_{2r})+w(S_{2r-2})+w(S_{2r-4})+\dots$$
    In fact, if $\max(p,q)=\Omega(r^3)$, then Conjecture 2 holds.
\end{theorem}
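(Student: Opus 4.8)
The plan is to treat the displayed inequality and the conditional proof of Conjecture 2 as two separate problems: the former is a soft bound that should hold throughout the range $r\le\frac12(p+q)$, whereas the latter requires the sublayer geometry to become lopsided, which is what the hypothesis $\max(p,q)=\Omega(r^3)$ arranges.

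For the inequality I would argue on antichains rather than chains. If $A$ is any antichain of $B_r$, then for each $\rho$ the intersection $A\cap S_\rho$ is an antichain of the induced poset $S_\rho$, whence $|A\cap S_\rho|\le w(S_\rho)$; summing over $0\le\rho\le r$ gives $w(B_r)\le\sum_{\rho=0}^{r}w(S_\rho)$. It then remains to see that this is at most the stated sum. Writing $n=p+q$, complementation in $[n]$ identifies the multisets $\{{p\choose a}{q\choose b}:a+b=\rho\}$ and $\{{p\choose a}{q\choose b}:a+b=n-\rho\}$, so $w(S_\rho)=w(S_{n-\rho})$; and the monotonicity already noted for the sublayers — incrementing $i$ or $j$ cannot decrease ${p\choose i}{q\choose j}$ while $i+j<\frac12 n$ — shows $w(S_\rho)$ is nondecreasing for $\rho\le\frac12 n$. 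Thus the profile $\rho\mapsto w(S_\rho)$ is symmetric and unimodal about $\frac n2$. Comparing the two equal-length sorted lists $\{w(S_\rho)\}_{\rho=0}^{r}$ and $\{w(S_{2m})\}_{m=0}^{r}$, one checks that for every threshold the right list has at least as many entries above it (this reduces to $r\le\frac n2$), so it dominates the left list entry by entry and $\sum_{\rho=0}^{r}w(S_\rho)\le\sum_{m=0}^{r}w(S_{2m})=w(S_{2r})+w(S_{2r-2})+\cdots$, as claimed.

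For Conjecture 2, assume without loss of generality $q\ge p$ and $q\ge Cr^3$ with $C$ large. The first step is to pin down the largest layer. Passing one sphere inward, from $X_{i,j}$ to $X_{i-1,j-1}$ at a fixed height, scales the cell by $\frac{ij}{(p-i+1)(q-j+1)}\le\frac{r^2}{q-r}$, so along each layer the outermost cell — lying on $S_r$, or on $S_{r-1}$ when parity forbids $S_r$ — carries all but a $\frac{r^2}{q-r}$ fraction of the layer. By the Corollary $S_r$ is KLYM, so its largest layer is a single cell $X^\ast=X_{a^\ast,b^\ast}$, and the same inward estimate gives $w(S_{r-1})\le\frac{r}{q-r}\,w(S_r)$. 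Hence the layer $L$ at height $y^\ast=b^\ast-a^\ast$ satisfies $|L|=w(S_r)(1+O(r^2/q))$ and beats every layer of the opposite parity outright; among layers of its own parity it wins because the Observation makes the ridge of $S_r$ a strict maximum, with relative gap $\sim 1/r$ to its neighbours. The governing inequality is precisely that this gap dominates the inward correction, $1/r\gg r^2/q$, i.e.\ $q\gg r^3$; this is where the cube in the hypothesis is spent, the binding case being $p\sim q$.

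It remains to realise the weighted chain family of the Lemma, and here I would follow the recipe announced after the Corollary by anchoring everything on $X^\ast$. Through each point of $X^\ast$ send the maximal chain that rises by deleting its $a^\ast$ removed coordinates and then enlarging the added set up to the top cell $X_{0,r}$, and falls symmetrically to the bottom cell $X_{r,0}$; these $|X^\ast|$ chains meet every layer once and cover $X^\ast$ exactly once. Because every other cell is smaller than $X^\ast$ by the factors above, the rest of $B_r$ can be mopped up by comparatively few auxiliary chains threaded through the subdominant cells, tuned so that each $X_{i,j}$ is met at least $|X_{i,j}|$ times, and strictly more often off $L$, while the count through $L$ stays at one per point; the Lemma then certifies that $L$ is the unique largest antichain. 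I expect the genuine difficulty to be the existence of this coverage: it is a simultaneous normalized-matching problem, in which the chain slots at each height must be parcelled out to meet every cell's quota, the demands linked across heights by the only legal moves — deleting from the removed set or adjoining to the added set, both plain Boolean inclusions and hence individually well behaved as in the Corollary. What has to be controlled is the build-up of the per-layer discrepancies across the $\Theta(r)$ heights, each bearing $\Theta(r)$ cells whose quotas communicate through $\Theta(r)$ admissible steps; it is this triple factor of $r$, matched against the slack between $|L|$ and the sizes of the remaining layers, that I expect to again force $q=\Omega(r^3)$ and to be the crux of the whole argument. Notably the naive bound $|A|\le\sum_\rho w(S_\rho)\le w(S_r)(1+O(1/(Cr)))$ overshoots $|L|$, which equals $w(S_r)$ up to a strictly smaller correction, by a multiplicative margin of order $1/(Cr)$; sphere-by-sphere counting therefore cannot deliver the exact result, and the chain construction is indispensable.
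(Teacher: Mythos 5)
Your reduction $w(B_r)\le\sum_{\rho=0}^{r}w(S_\rho)$ is fine, but the majorization step that is supposed to turn it into the stated bound is false, and with it your proof of the inequality collapses. Take $p=q=2$, $r=2$ (allowed, since $r=\frac12(p+q)$). Then $w(S_0)=1$, $w(S_1)=2$, $w(S_2)=4$, $w(S_4)=1$, so
$$\sum_{\rho=0}^{2}w(S_\rho)=1+2+4=7>6=w(S_4)+w(S_2)+w(S_0).$$
Sorted, your two lists are $(1,2,4)$ and $(1,1,4)$: for a threshold between $1$ and $2$ the right list has fewer entries above it, so it does not dominate the left list; your parenthetical claim that domination ``reduces to $r\le\frac12 n$'' is exactly what fails, because once $2m$ passes $\frac12 n$ the terms $w(S_{2m})=w(S_{n-2m})$ decrease again, and the hypothesis permits this regime. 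The paper's route is different and avoids sphere-by-sphere counting altogether (as your own closing remark says it must): it groups \emph{adjacent} spheres. Since heights alternate in parity, every layer of $S_\rho\cup S_{\rho-1}$ is a single cell $X_{i,j}$; running a chain family through all layers of this pair and applying the Lemma shows that $w(S_\rho\cup S_{\rho-1})$ equals its largest cell, which for $\rho\le\frac12(p+q)$ lies in $S_\rho$ and equals $w(S_\rho)$. Induction over the pairs $S_r\cup S_{r-1},\,S_{r-2}\cup S_{r-3},\dots$ then yields the inequality (in fact the stronger bound $w(S_r)+w(S_{r-2})+\cdots$, which in the example above gives the sharp value $5$).

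On the conditional proof of Conjecture 2, your skeleton -- anchor on the largest cell $X^\ast$ of $S_r$, use $q\gg r^3$ to make everything else comparatively small, then feed a chain family into the Lemma -- matches the paper, and your accounting of where the cube comes from is morally right. But you explicitly leave the existence of the chain coverage as an expectation (``I expect the genuine difficulty to be the existence of this coverage''), and that existence is precisely the content of the theorem; a proof cannot end where the difficulty begins. The paper supplies it: writing $X_{i,j}$ for the largest cell, the hypothesis $q\ge\left(\frac{r+\frac12}{3}\right)^3+r-3$ gives the cell inequality
$$|X_{i,j}|-|X_{i+1,j-1}|\ge |X_{i,j-2}|,$$
which is exactly what allows the $|X_{i,j}|$ chains descending from $X_{i,j}$ to be funneled through $X_{i,j-1}$ and then split so that both cells $X_{i+1,j-1}$ and $X_{i,j-2}$ of the layer two heights down are each struck at least once per point; this iterates all the way down, while upward one zigzags within $S_r\cup S_{r-1}$, and the Lemma then certifies the largest layer as the unique maximum antichain. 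Without this inequality and routing, or a worked substitute for them, your second part is a plan rather than a proof.
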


\begin{proof}
    As we have seen, $\frac{i+1}{j}>\frac{p+1}{q+1}$ if and only if $|X_{i,j}|>|X_{i+1,j-1}|$. The solution of $\frac{i+1}{j}=\frac{p+1}{q+1}$, given $i+j=r$, satisfies $i+1+j\le \frac12(p+1+q+1)$, whence $i+1\le \frac12(p+1)$ and $j\le \frac12(q+1)$. Rounding $j$ down and $i$ up, we obtain the indices of a largest layer $X_{i,j}\subset S_r$. It follows that $S_{r-1}$ has largest layer either $X_{i-1,j}$ or $X_{i,j-1}$, both of which have size not exceeding that of $X_{i,j}$.

    If $\mathcal{C}$ contains one chain that passes through every layer of $S_r\cup S_{r-1}$, we are able to deduce that its width is at most $|X_{i,j}|$. The inequality follows by induction.

    Interchanging $p$ and $q$, let $q=\max(p,q)$. Further, consider that 
    $$|X_{i,j}|-|X_{i+1,j-1}|\ge |X_{i,j-2}|$$
    or equivalently, $\frac{q-j+1}{j}-\frac{p-i}{i+1}\ge \frac{j-1}{q-j+2}$. For the values of $i,j$ as before, the left-hand fraction is at least $\frac1{j(i+1)}$, from least common denominators. As $q-j+2\ge q-r+3$, we now have $q-r+3\ge j(j-1)(i+1)$, implied by $q\ge \left(\frac{r+\frac12}{3}\right)^3+r-3=\Omega(r^3)$.

    Wishing for a list $\mathcal{C}$ of chains in $B_r$ that strike every $X_{i,j}$ at least once per point, let us begin with striking the largest $X_{i,j}\subset S_r$ as above. Directing all chains through $X_{i,j-1}$, it has been struck, and then we can strike $X_{i,j-2}$ as well as $X_{i+1,j-1}$, according to the bound. This can be iterated all the way down, as the bound stays in range. Going upward from $X_{i,j}$, it suffices to zigzag in $S_{r-1}\cup S_r$. Since chains through that $X_{i,j}$ can fulfill all sublayers it covers, we are free to keep $x\cap [p]$ fixed for chains descending from $x\in X_{i-l,j-l}$, $l=1,2,\dots$, zigzagging upwards. This completes the proof, according to the Lemma.
\end{proof}

Although our Conjecture can be verified case by case with ideas presented here, a comprehensive proof is unknown as of yet.

\textsc{Department of Pure Mathematics and Mathematical Statistics, University \
of Cambridge, Wilberforce Road, Cambridge CB3 0WB.} \\ \\
\textit{E-mail address:} \texttt{kkw25@cam.ac.uk}

\end{document}